\newcommand{\set}[1]{\left\{#1\right\}}
\newcommand{\norm}[1]{\left\lVert#1\right\rVert}
\newcommand{\pair}[1]{\left\langle#1\right\rangle_H}
\newcommand{\ind}[1]{1\hspace{-.28em}\mathrm{I}_{#1}}
\newcommand{\abs}[1]{\left\vert#1\right\vert}
\newcommand{\aabs}[1]{\big\lvert#1\big\rvert}
\newcommand{\eps}{\varepsilon}
\newcommand{\db}{\Delta B^H}
\newcommand{\dw}{\Delta W}
\newcommand{\al}{\alpha}
\newtheorem{theorem}{Theorem}[section]
\newtheorem{lemma}{Lemma}[section]
\theoremstyle{remark}
\newtheorem{remark}{Remark}[section]
\theoremstyle{definition}
\newcommand{\R}{\mathbb{R}}
\newcommand{\xd}{X^\delta}
\newcommand{\xdc}[1]{X^\delta_{t_{#1}^\delta}}
\newcommand{\td}{t^\delta}
\newcommand{\F}{\mathcal{F}}
\newcommand{\ex}[1]{\mathsf{E}\left[#1\right]}
\title[Rate of convergence of Euler  approximations of mixed SDE]{Rate of convergence of Euler  approximations of solution to mixed stochastic differential equation involving Brownian
motion  and fractional Brownian motion}
\author{Yuliya S. Mishura}
\address{\it Department of Probability, Statistics
and Actuarial Mathematics, Mechanics and Mathematics Faculty, Kyiv Taras Shevchenko National University,
Volodymyrska, 60, 01601 Kyiv, Ukraine}
\email{myus@univ.kiev.ua}
\author{Georgiy M. Shevchenko}
\address{\it Department of Probability, Statistics
and Actuarial Mathematics, Mechanics and Mathematics Faculty, Kyiv Taras Shevchenko National University,
Volodymyrska, 60, 01601 Kyiv, Ukraine}
\email{zhora@univ.kiev.ua}
\subjclass[2010]{60G15;
60G22; 60H10; 60J65}
\keywords{Fractional Brownian motion, mixed stochastic differential equation, pathwise integral, Euler approximation}
\begin{document}
\begin{abstract}
We consider a mixed stochastic differential equation
involving both standard Brownian motion and fractional Brownian
motion with Hurst parameter $H>1/2$. The mean-square rate of convergence of Euler approximations of
solution to this equation is obtained.
\end{abstract}

\maketitle%

\section*{Introduction}
The main object of this paper is the following mixed stochastic differential equation
involving independent  Wiener process $B$ and fractional Brownian motion $B^H$ with Hurst
index $H \in (1/2,1)$:
 \begin{equation}\label{main} X_t =X_0 +\int_0^t a(s,X_{s})ds+
\int_0^tb(s,X_{s})dW_s+\int_0^tc(X_{s})dB_s^H,\
t\in[0,T],\end{equation}
where the integral w.r.t.\ Wiener process is the standard It\^o integral, and the integral w.r.t.\ fBm is the forward
stochastic integral.
The questions of existence and uniqueness of solution for equations of such type were considered in  \cite{kub2,MisP,GN,MS}.

Such mixed equations arise in different applied areas. In financial mathematics, for example, it is often natural to assume that the underlying
random noise consists of two parts: a ``fundamental'' part, describing the economical background for a stock price, and a ``trading''
part, coming from the randomness inherent for the stock market. In this case the fundamental part of the noise should have a long memory,
while the second part is likely to be a white noise.

Due to a wide area of applications of equation \eqref{main}, it is important to consider certain numerical methods to solve it. We
use here the most popular and probably the simplest method of Euler approximations: one takes a uniform partition of the
interval, where the equation is being solved, and replaces differentials by a correspondent finite differences. There is a
vast literature dedicated to numerical methods for  stochastic differential equations driven by the Wiener process,
we refer to classical monographs \cite{milsh} and \cite{k-p} for an overview of the subject. There are also several papers
dealing with discrete time approximations for stochastic differential equations with fractional Brownian motion, for example, \cite{MSa,nourdin,davie}.

The main difficulty when considering equation \eqref{main} lies in the fact that the machinery behind the two stochastic
integrals is very different. The It\^o integral is treated usually in a mean square sense, while the integral with respect
to fractional Brownian motion is understood and controlled in a  pathwise sense. The mixture of two integrals makes things a lot harder,
forcing us to consider very smooth coefficients and to make delicate estimates.

The paper is organized as follows. In Section 1, we give basic fact about forward and Skorokhod
integration with respect to fractional Brownian motion and formulate main hypotheses. In Section 2, we define Euler approximations of \eqref{main} and establish some uniform integrability results for them. Section 3 contains the main result about rate of convergence of Euler approximations for equation \eqref{main}. Unsurprisingly, the rate of convergence appears to be equal to the worst of the rates for corresponding ``pure'' equations,
i.e.\ the mean-square distance between true and approximate solutions is of order $\delta^{1/2}\vee \delta^{2H-1}$, where $\delta$ is
the mesh of the partition.

\section{Preliminaries}

\subsection{Fractional Brownian motion and stochastic integration}
In this section we give basic facts about the stochastic calculus for fractional Brownian motion.
A more extensive exposition can be found e.g.\ in \cite{decre98,biaginietal}.

Fractional Brownian motion (fBm) $B^H$ is by definition a centered Gaussian process with the covariance
$$
\ex{B^H_t B_s^H} = \frac12 \left(t^{2H}+s^{2H} - \abs{t-s}^{2H}\right),\ t,s\ge 0.
$$
It has a version with almost surely $\kappa$-H\"older continuous paths for any $\kappa<H$. For
$H\in (1/2,1)$ (the case we consider here) it exhibits a property of long-range dependence.

Let $L^H_2[0,T]$ be the completion of the space of continuous functions with respect to the scalar
product
$$
\pair{f,g} = \iint_{[0,T]^2} f(t)g(s)\psi(t,s) ds\,dt,
$$
where $\psi(t,s) = H(2H-1)\abs{t-s}^{2H-2}$.
Denote also by $\norm{f}_H = \sqrt{\pair{f,f}}$ the corresponding norm.

Now we recall the notion of stochastic derivative. Let infinitely differentiable function $F\colon \R^n \to \R$ be
bounded along with derivatives. For a smooth functional $G=F(B^H_{t_1},\dots,B^H_{t_n})$, where $t_1,\dots,t_n\in[0,T]$
the stochastic derivative is defined as
$$
D_s G = \sum_{k=1}^n F'_{x_k} (B^H_{t_1},\dots,B^H_{t_n})\ind{[0,t_k]}(s).
$$
The Sobolev space $\mathbb D^{1,2}$ is the closure of the space of smooth functionals with respect to the norm
$$
\norm{G}_{1,2}^2 = \ex{G^2} + \ex{\norm {D\, G}_H^2}.
$$

The Skorokhod, or divergence, stochastic integral is the adjoint to the stochastic derivative in the following sense.
Let the domain $\mathrm{dom}\, \delta$ of the divergence integral be the space of random processes $u\in L_2(\Omega, L_2^H[0,T])$ such that
$$
\ex{\pair{D\,G, u}}\le C_u \norm{G}_{L_2(\Omega)}
$$
for all $G\in \mathbb D^{1,2}$. Then the divergence integral
$$
\delta(u) = \int_0^T u_t \delta B^H_t
$$
is defined as the unique element of $L_2(\Omega)$ such that
\begin{equation}\label{iBp}
\ex{\pair{D\,G, u}} = \ex{G\delta(u)}
\end{equation}
for all $G\in \mathbb D^{1,2}$. It is worth to remark that $\mathrm{dom}\,\delta$
contains the space $\mathbb D^{1,2}(L^2[0,T])$ of processes such that
$$
\norm{u}^2_{H;1,2} = \ex{\norm{u}_H^2} + \iiiint_{[0,T]^4}\ex{D_t u_v D_s u_z}\psi(t,s)\psi(v,z)ds\,dt\,dz\,dv
$$
is finite. Moreover, for such processes
\begin{equation}\label{skorintestim}
\ex{\delta(u)^2}\le \norm{u}^2_{H;1,2}.
\end{equation}

The forward integral with respect to fBm is defined as the uniform limit in probability
$$
\int_0^t u_s dB^H_s = \lim_{\eps\to 0} \int_0^t u_s \frac{B^H_{s+\eps} - B^H_s}{\eps} ds,
$$
provided this limit exists. It is well-known (see e.g.\ \cite{biaginietal}) that
if for $u\in \mathbb D^{1,2}(L_2^H[0,T])$
$$
\iint_{[0,T]^2}  \abs{D_s u_t} \psi(t,s)ds\,dt<\infty,
$$
then the forward integral exists and is equal to
\begin{equation}\label{forw-skor-rel}
\int_0^T u_t dB^H_t = \int_0^T u_t\delta B^H_t + \iint_{[0,T]^2} D_s u_t \psi(t,s)ds\,dt.
\end{equation}

\subsection{Assumptions}
The following hypotheses on the ingredients of equation \eqref{main} will be assumed throughout the paper.
\begin{list}{Hypotheses}{\parsep=-1mm}
\item[(A)] The functions $a$ and $b$ are bounded together with their derivatives $a'_x$, $b'_x$:
$$
\abs{a(t,x)}+ \abs{b(t,x)}+ \abs{a'_x(t,x)}+ \abs{b'_x(t,x)} \le K;
$$

\item[(B)] the functions $a$ and $b$ are uniformly $(2H-1)$-H\"older continuous in time:
$$
\abs{a(t,x)-a(s,x)}+ \abs{b(t,x)-b(s,x)}\le K\abs{t-s}^{2H-1};
$$

\item[(C)] the coefficient $c$ is bounded together with its first and second derivatives and uniformly
positive:
$$
0\le c(x)+c(x)^{-1}+ \abs{c'(x)}+ \abs{c''(x)}\le K.
$$
Here $K$ is  a constant independent of $x$, $t$ and $s$;

\item[(D)] the Wiener process $W$ and the fractional Brownian motion $B^H$ are independent.
\end{list}

In what follows $C$ will denote a generic constant, whose value might change from line to line. To emphasize
dependence on some variables, we will put them into subscript. For a random process $X$ we denote
its increments by $X_{t,s} = X_t-X_s$.

\section{Euler approximations and auxiliary results}

For $N\ge 1$ consider the following partition of the fixed interval $[0,T]:$
$\{0=\nu_0 < \nu_1< \dots < \nu_N = T,\ \delta=T/N\},\
\nu_k=k\delta.$

The Euler approximation for equation  \eqref{main} is defined recursively as
\begin{equation*}
\begin{gathered}
X_{\nu_{k+1}}^{\delta} =X_{\nu_k}^{\delta} +a(\nu_k,X_{\nu_k}^{\delta})\delta +
b(\nu_k,X_{\nu_k}^{\delta})\dw_k +c(\xd_{\nu_{k}})\db_k,
\end{gathered}
\end{equation*}
where $\dw_k = W_{\nu_{k+1},\nu_k}$, $\db_k = B^H_{\nu_{k+1},\nu_k}$.
The initial value of approximations is $X_{\nu_0}^{\delta}=X_0$.

Set $n^\delta_u=\max \{n:\nu_n \leq u \}$, $t_u^\delta = \nu_{n^\delta_u}$, and define continuous interpolation by
\begin{equation*} X_u^{\delta} =X_{t^\delta_u}^{\delta} +a(t^\delta_u,X_{t^\delta_u}^{\delta})(u-t^\delta_u)+
b(t^\delta_u,X_{t^\delta_u}^{\delta})W_{u,t^\delta_u}+c(X_{t^\delta_u}^{\delta})B^H_{u,t^\delta_u},\end{equation*}
or, in the integral form,
\begin{equation}\label{euler} X_u^{\delta} =X_0 +\int_0^u a(t^\delta_s,X_{t^\delta_s}^{\delta})ds+
\int_0^ub(t^\delta_s,X_{t^\delta_s}^{\delta})dW_s+\int_0^u c(X_{t^\delta_s}^{\delta})dB_s^H.
\end{equation}

The following lemma is a discrete analogue of the Gronwall inequality.
\begin{lemma}
\label{discgronwall}
If a non-negative sequence $\{x_n,n\ge 1\}$ satisfies
$$
x_{n+1} \le x_n (1+K\delta) + K\delta.
$$
Then
$$
x_{n} \le  (x_0 + 1) e^{K\delta n}.
$$
\end{lemma}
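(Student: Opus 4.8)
The plan is to prove the discrete Gronwall inequality by induction on $n$, which is the natural approach for a recursive bound of this form.

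First I would verify the base case. For $n=0$, the claimed bound reads $x_0 \le (x_0+1)e^{0} = x_0+1$, which holds trivially since $x_0 \le x_0 + 1$. This gives me a starting point that already carries the ``$+1$'' slack needed to absorb the additive constant term in the recursion.

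Next I would carry out the inductive step. Assuming $x_n \le (x_0+1)e^{K\delta n}$, I substitute into the hypothesis to get
$$
x_{n+1} \le x_n(1+K\delta) + K\delta \le (x_0+1)e^{K\delta n}(1+K\delta) + K\delta.
$$
The key elementary fact I would invoke is the inequality $1+K\delta \le e^{K\delta}$, valid for all real arguments. The main point to handle carefully is the stray additive term $K\delta$: I want to bound $(x_0+1)e^{K\delta n}(1+K\delta) + K\delta$ by $(x_0+1)e^{K\delta(n+1)}$. Since $(x_0+1)\ge 1$, I can write $K\delta \le (x_0+1)K\delta e^{K\delta n}$ (using $e^{K\delta n}\ge 1$), so that
$$
(x_0+1)e^{K\delta n}(1+K\delta) + K\delta \le (x_0+1)e^{K\delta n}(1+K\delta) + (x_0+1)K\delta\, e^{K\delta n}.
$$
This gives $(x_0+1)e^{K\delta n}(1+2K\delta)$, which would require $1+2K\delta \le e^{K\delta}$ and fails. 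So the cleaner route is to fold the additive constant into the recursion up front: set $y_n = x_n + 1$, observe that $y_{n+1} = x_{n+1}+1 \le x_n(1+K\delta) + K\delta + 1 \le (x_n+1)(1+K\delta) = y_n(1+K\delta)$, and then iterate $y_n \le y_0(1+K\delta)^n \le y_0 e^{K\delta n}$, which is exactly the claim.

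The step I expect to be the main (though minor) obstacle is precisely this bookkeeping with the additive term: a naive induction on $x_n$ alone leaves a leftover $K\delta$ that does not combine cleanly with the exponential factor. The substitution $y_n = x_n+1$ is the device that makes the recursion homogeneous and lets the single inequality $1+K\delta\le e^{K\delta}$ close the argument without accumulating error. Once this shift is in place the proof is a routine telescoping, and no integrability or measurability considerations enter since the statement is purely deterministic.
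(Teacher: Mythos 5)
Your final argument is correct: the substitution $y_n = x_n+1$ makes the recursion homogeneous, since $x_n(1+K\delta)+K\delta+1 = (x_n+1)(1+K\delta)$ exactly, and then $y_n \le y_0(1+K\delta)^n \le y_0 e^{K\delta n}$ gives the claim (indeed with the slightly stronger bound $x_n \le (x_0+1)e^{K\delta n}-1$). The paper states this lemma without proof, treating it as a standard discrete Gronwall fact, and your argument is exactly the standard one; your preliminary observation that a naive induction on the stated bound leaves an unabsorbable $K\delta$ term is also accurate and correctly motivates the shift to $y_n$.
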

The following two lemmas are technical.
\begin{lemma}
For $s< \nu_n$, $n\ge 1$, one has
\begin{equation}\label{Ds formula}
D_s \xd_{\nu_n} = c(\xdc{s}) \prod_{k=n_s^{\delta}}^{n-1} \big(1+a'_x(\nu_k,\xd_{\nu_k})\delta + b'_x(\nu_k,\xd_{\nu_k})\dw_k + c'(\xd_{\nu_k})\db_k \big)
\end{equation}
(the product is set to $1$ when the upper limit is smaller than the lower).
\end{lemma}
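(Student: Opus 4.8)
The plan is to prove \eqref{Ds formula} by induction on $n$, differentiating the one-step Euler recursion and exploiting the independence of the two driving processes. Throughout, $D_s$ is the Malliavin derivative with respect to $B^H$, and I would lean on three elementary facts. First, $D_s B^H_t = \ind{[0,t]}(s)$, so that $D_s\db_k = \ind{(\nu_k,\nu_{k+1}]}(s)$; in particular $D_s\db_k=0$ once $s<\nu_k$. Second, by Hypothesis~(D) the increment $\dw_k$ is measurable with respect to a $\sigma$-field independent of $B^H$, hence $D_s\dw_k=0$. Third, the chain and product rules $D_s f(\xi)=f'(\xi)\,D_s\xi$ and $D_s(\xi\eta)=\xi\, D_s\eta+\eta\, D_s\xi$ apply to the smooth coefficients $a(\nu_k,\cdot),b(\nu_k,\cdot),c$ of Hypotheses~(A),(C).

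Fix $s$ and set $m=n_s^\delta$, so that $\nu_m\le s<\nu_{m+1}$, $\xdc{s}=\xd_{\nu_m}$, and the constraint $s<\nu_n$ is precisely $n\ge m+1$. The first remark is that $\xd_{\nu_m}$ is a functional of $\db_0,\dots,\db_{m-1}$ and of $W$ only, all of which carry $D_s(\cdot)=0$ for $s>\nu_m$; propagating this down the recursion gives $D_s\xd_{\nu_m}=0$. This anchors the induction at $n=m+1$: applying $D_s$ to
\[
\xd_{\nu_{m+1}}=\xd_{\nu_m}+a(\nu_m,\xd_{\nu_m})\delta+b(\nu_m,\xd_{\nu_m})\dw_m+c(\xd_{\nu_m})\db_m,
\]
the first three terms vanish (each carries a factor $D_s\xd_{\nu_m}=0$ or $D_s\dw_m=0$), while the last yields $c(\xd_{\nu_m})\,D_s\db_m=c(\xdc{s})$ for $s\in(\nu_m,\nu_{m+1})$. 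Thus $D_s\xd_{\nu_{m+1}}=c(\xdc{s})$, the leading factor of \eqref{Ds formula}, with no product factors present at this first index.

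For the inductive step, assume \eqref{Ds formula} for some $n\ge m+1$ and apply $D_s$ to $\xd_{\nu_{n+1}}=\xd_{\nu_n}+a(\nu_n,\xd_{\nu_n})\delta+b(\nu_n,\xd_{\nu_n})\dw_n+c(\xd_{\nu_n})\db_n$. Since $s<\nu_{m+1}\le\nu_n$ we have $D_s\db_n=0$, and $D_s\dw_n=0$, so only the terms carrying $D_s\xd_{\nu_n}$ survive and the chain rule collects them into
\[
D_s\xd_{\nu_{n+1}}=D_s\xd_{\nu_n}\big(1+a'_x(\nu_n,\xd_{\nu_n})\delta+b'_x(\nu_n,\xd_{\nu_n})\dw_n+c'(\xd_{\nu_n})\db_n\big).
\]
Inserting the inductive hypothesis appends exactly the factor indexed by $k=n$ to the product, which completes the induction and reproduces \eqref{Ds formula}, one factor being adjoined at each propagation step.

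The genuine difficulty is not this algebra but justifying the Malliavin calculus rules in the present low-regularity setting: $D$ is defined in the excerpt only on $C^\infty$ functionals bounded with all derivatives, whereas $a,b,c$ are merely $C^1$ (respectively $C^2$) with bounded derivatives and the Euler iterates are unbounded, product-type functionals of the Gaussian increments. I would therefore first show by induction that each $\xd_{\nu_k}\in\mathbb D^{1,2}$, using that sums, products, and $C^1$-images with bounded derivative of $\mathbb D^{1,2}$ variables remain in $\mathbb D^{1,2}$, together with the moment bounds for the iterates and their derivatives obtained in Section~2; only then may one invoke the closedness of $D$ to extend the chain and product rules from smooth $f$ to the coefficients at hand. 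Concretely, one can mollify $a,b,c$ into $a_\eps,b_\eps,c_\eps\in C^\infty_b$, prove \eqref{Ds formula} for the mollified scheme where the rules are automatic, and pass to the limit $\eps\to0$ using the moment estimates and closedness of $D$. The boundary values of the indicators at the nodes $s=\nu_k$ affect $D_s\xd_{\nu_n}$ only on a Lebesgue-null set and are immaterial for the $L_2^H$-norms used in the sequel.
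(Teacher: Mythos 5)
Your argument follows the same route as the paper's own proof: induction on $n$ through the Euler recursion, using $D_s\dw_k=0$ (independence, Hypothesis (D)), $D_s\db_k=\ind{(\nu_k,\nu_{k+1}]}(s)$, and the chain rule, each step past the first appending one factor $\big(1+a'_x(\nu_k,\xd_{\nu_k})\delta+b'_x(\nu_k,\xd_{\nu_k})\dw_k+c'(\xd_{\nu_k})\db_k\big)$; your closing remarks on extending the Malliavin rules beyond smooth bounded functionals (mollification plus closedness of $D$) address a point the paper passes over in silence.

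One bookkeeping discrepancy should be flagged, though it originates in the paper rather than in your reasoning. With the stated convention $n^\delta_s=\max\{n:\nu_n\le s\}$, what your induction actually proves is
\[
D_s \xd_{\nu_n} = c(\xdc{s}) \prod_{k=n_s^{\delta}+1}^{n-1} \big(1+a'_x(\nu_k,\xd_{\nu_k})\delta + b'_x(\nu_k,\xd_{\nu_k})\dw_k + c'(\xd_{\nu_k})\db_k \big),
\]
with the product starting at $k=n_s^\delta+1$. This is the correct formula: for $n=1$ and $s\in(0,\nu_1)$ one has $D_s\xd_{\nu_1}=c(X_0)$, whereas \eqref{Ds formula} as printed would produce the extra factor $\big(1+a'_x(0,X_0)\delta+b'_x(0,X_0)\dw_0+c'(X_0)\db_0\big)$. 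So your base case (``no product factors present'') is right, but it does not literally ``reproduce \eqref{Ds formula}'': at $n=n_s^\delta+1$ the printed product $\prod_{k=n_s^\delta}^{n-1}$ contains exactly one factor, not zero. You have in fact proved a corrected version of the lemma. The paper's own proof contains the mirror-image slip: it anchors the induction at ``$n=n^\delta_s$'' (which is incompatible with the hypothesis $s<\nu_n$, since $\nu_{n_s^\delta}\le s$) and produces the prefactor $c(\xd_{\nu_{n-1}})$ instead of $c(\xdc{s})$; unravelled with consistent indexing it yields the same corrected formula as yours. None of this affects the rest of the paper, where only moment bounds for products of such factors are used, but in your write-up you should state explicitly that the lower limit of the product in \eqref{Ds formula} must be $n_s^\delta+1$, rather than assert agreement with the formula as printed.
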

\begin{proof}
Clearly, $D_s \xd_{\nu_n}=0$ if $\nu_n <s$. Now observe that $D_s \db_n = \ind{n=n^\delta_s} $. Hence, for $n=n^\delta_s$, we have
\begin{gather*}
D_s \xd_{\nu_n} = D_s \big(X_{\nu_{n-1}}^{\delta} +a(\nu_{n-1},X_{\nu_{n-1}}^{\delta})\delta +
b(\nu_{n-1},X_{\nu_{n-1}}^{\delta})\dw_{n-1}\\ +c(\xd_{\nu_{n-1}})\db_{n-1}\big)
= c(\xd_{\nu_{n-1}})D_s\db_{n-1} = c(\xd_{\nu_{n-1}}).
\end{gather*}
Further, for $n>n^\delta_s$ we can write
\begin{gather*}
D_s \xd_{\nu_n} = D_s \big(X_{\nu_{n-1}}^{\delta} +a(\nu_{n-1},X_{\nu_{n-1}}^{\delta})\delta\\
{} +
b(\nu_{n-1},X_{\nu_{n-1}}^{\delta})\dw_{n-1} +c(\xd_{\nu_{n-1}})\db_{n-1}\big)
=\big(1 +a'_x(\nu_{n-1},X_{\nu_{n-1}}^{\delta})\delta\\
{}+b'_x(\nu_{n-1},X_{\nu_{n-1}}^{\delta})\dw_{n-1} +c'(\xd_{\nu_{n-1}})\db_{n-1}\big)D_s \xd_{\nu_{n-1}},
\end{gather*}
and deduce \eqref{Ds formula} by induction.
\end{proof}

%
\begin{lemma}\label{lem:variation estimate}
For any $M>0$ it holds
$$
\ex{\exp\set{M\sum_{k=0}^{N-1}\left((\dw_k)^2+(\db_k)^2\right)}}<C_M
$$
for all $N$ large enough with $C_M$ independent of  $N$.
\end{lemma}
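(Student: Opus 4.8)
The plan is to view the whole sum as a quadratic form in a single centered Gaussian vector and then apply the classical formula for the exponential moment of such a form. Collect all the increments into
$$
\mathbf{Z} = (\dw_0,\dots,\dw_{N-1},\db_0,\dots,\db_{N-1})\in\R^{2N},
$$
which is a centered Gaussian vector since $W$ and $B^H$ are jointly Gaussian. Then $\sum_{k=0}^{N-1}\big((\dw_k)^2+(\db_k)^2\big)=\mathbf{Z}^{\top}\mathbf{Z}$, and, writing $\Gamma=\mathrm{Cov}(\mathbf{Z})$, the standard identity for the moment generating function of a Gaussian quadratic form gives
$$
\ex{\exp\set{M\,\mathbf{Z}^{\top}\mathbf{Z}}}=\det(I_{2N}-2M\Gamma)^{-1/2},
$$
valid precisely when $I_{2N}-2M\Gamma$ is positive definite, i.e.\ when $2M\lambda_{\max}(\Gamma)<1$.

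Next I would exploit the structure of $\Gamma$. By Hypothesis (D) the blocks coming from $W$ and from $B^H$ are uncorrelated, so $\Gamma$ is block diagonal: the Wiener block is $\delta I_N$ (the $\dw_k$ are independent $N(0,\delta)$), while the fBm block $\Sigma$ is the full Toeplitz covariance matrix of the increments $\db_k$, with diagonal entries $\ex{(\db_k)^2}=\delta^{2H}$. The only two facts I need about $\Gamma$ are its trace and a crude bound on its top eigenvalue. Since all eigenvalues of a covariance matrix are nonnegative,
$$
\lambda_{\max}(\Gamma)\le\mathrm{tr}\,\Gamma=N\delta+N\delta^{2H}=T+T^{2H}N^{1-2H}.
$$
The crucial point is the scaling $N\delta^{2H}=T^{2H}N^{1-2H}$: because $H>1/2$ we have $1-2H<0$, so this term tends to $0$.

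From here the argument is routine. For the positive definiteness note $\lambda_{\max}(\Gamma)=\max\big(\delta,\lambda_{\max}(\Sigma)\big)$, and both $\delta=T/N\to0$ and $\lambda_{\max}(\Sigma)\le\mathrm{tr}\,\Sigma=T^{2H}N^{1-2H}\to0$; hence $2M\lambda_{\max}(\Gamma)<1$ for all $N$ large enough, which is exactly the regime of the statement. For such $N$, writing the eigenvalues of $\Gamma$ as $\mu_1,\dots,\mu_{2N}\ge0$,
$$
\log\ex{\exp\set{M\,\mathbf{Z}^{\top}\mathbf{Z}}}=-\tfrac12\sum_{i=1}^{2N}\log(1-2M\mu_i).
$$
Taking $N$ so large that $2M\mu_i\le\tfrac12$ for every $i$, and using the elementary inequality $-\log(1-x)\le(2\log2)\,x$ on $[0,\tfrac12]$ (the function $x\mapsto-x^{-1}\log(1-x)$ is increasing), I bound the right-hand side by
$$
(2\log2)\,M\sum_{i=1}^{2N}\mu_i=(2\log2)\,M\,\mathrm{tr}\,\Gamma\le(2\log2)\,M\,(T+T^{2H}),
$$
the last step because $N^{1-2H}\le1$. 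Exponentiating gives the claim with $C_M=\exp\set{(2\log2)M(T+T^{2H})}$, independent of $N$.

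The main obstacle is that the fBm increments $\db_k$ are strongly correlated — this is precisely the long-range dependence of $B^H$ — so $\Sigma$ is not diagonal and one cannot factor the expectation as one does for the independent Wiener increments. The resolution is to avoid any fine spectral analysis of the Toeplitz matrix $\Sigma$ altogether: the trace together with the trivial bound $\lambda_{\max}\le\mathrm{tr}$ is all that is needed, and the decay $N\delta^{2H}\to0$ for $H>1/2$ does the rest.
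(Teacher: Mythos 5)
Your proof is correct, and it takes a genuinely different route from the paper's. The paper first factors the expectation using the independence of $W$ and $B^H$ (Hypothesis (D)): the Wiener part splits into a product of i.i.d.\ scalar Gaussian exponential moments, while the correlated fBm increments are tamed by a generalized H\"older inequality with exponent $N$, namely $\ex{\exp\set{M\sum_k(\db_k)^2}}\le\prod_k\ex{\exp\set{MN(\db_k)^2}}^{1/N}=\big(1-2MT^{2H}N^{1-2H}\big)^{-1/2}$, and boundedness follows by letting $N\to\infty$ in the resulting closed-form expressions. You instead package all increments into one centered Gaussian vector, invoke the determinant formula for the moment generating function of a Gaussian quadratic form, and control the spectrum of the covariance purely through its trace. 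The two arguments pivot on exactly the same scaling: your positivity condition $2M\lambda_{\max}(\Sigma)\le 2M\,\mathrm{tr}\,\Sigma=2MT^{2H}N^{1-2H}<1$ coincides with the paper's requirement $2MN\delta^{2H}<1$, so the H\"older trick and the trace bound are two crude-but-sufficient estimates of the same quantity. One point where care was needed and you took it: the global bound $\lambda_{\max}(\Gamma)\le\mathrm{tr}\,\Gamma\approx T$ alone would \emph{not} give positive definiteness of $I-2M\Gamma$ for large $M$; you correctly fell back on the block-diagonal structure (again Hypothesis (D)) to get $\lambda_{\max}(\Gamma)=\max\big(\delta,\lambda_{\max}(\Sigma)\big)\to0$. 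As for what each approach buys: yours produces an explicit $N$-free constant $C_M=\exp\set{(2\log2)M(T+T^{2H})}$ with no limiting argument, and it generalizes verbatim to any centered Gaussian vector whose covariance has bounded trace and vanishing top eigenvalue; the paper's proof stays entirely at the level of one-dimensional Gaussian computations plus H\"older's inequality, with no linear algebra or spectral theory required.
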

\begin{proof}
Using independence of $W$ and $B^H$, we then can write
\begin{gather*}
\ex{\exp\set{M\sum_{k=0}^{N-1}\left((\dw_k)^2+(\db_k)^2\right)}}\\
= \ex{\exp\set{M\sum_{k=0}^{N-1} (\dw_k)^2}}\ex{M\exp\set{\sum_{k=0}^{N-1} (\db_k)^2}}\\
\le  \prod_{k=0}^{N-1}\ex{\exp\set{M (\dw_k)^2}}\Big(\prod_{k=0}^{N-1}\ex{\exp\set{ M N (\db_k)^2}}\Big)^{1/N}\\
= C (1- 2 M\delta)^{-N/2} (1-2 M N \delta^{2H})^{-1/2}\\ = C (1-2 MT/N)^{-N/2} (1-2 M T^{2H}N^{1-2H})^{-1/2},
\end{gather*}
where the last equalities hold provided $2 M T/N<1$ and $2M T^{2H} N^{1-2H}<1$, which is true for all $N$ large enough.
Observing that
$$
C (1- 2 MT/N)^{-N/2} (1-2 M T^{2H}N^{1-2H})^{-1/2}\to C e^{MT}, \quad N\to\infty,
$$
we get the desired boundedness.
\end{proof}
Now we are ready to prove that the moments of Euler approximations as well as of their stochastic derivatives are uniformly bounded.
\begin{lemma}
For any $p>0$ one has
\begin{equation}
\label{Ds estimate}
\ex{\aabs{D_s \xd_{\nu_n}}^{p}}<C_p
\end{equation}
for all $s\in[0,T]$, $n\le N$,
with $C_p$ independent of $\delta$.
\end{lemma}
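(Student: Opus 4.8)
The plan is to start from the explicit formula \eqref{Ds formula} and reduce the estimate to a bound on the $p$-th moment of the product appearing there. Since $\abs{c}\le K$ by hypothesis (C), it suffices to show $\ex{\aabs{P_n}^p}\le C_p$ uniformly in $s$, $n$ and $\delta$, where $P_n=\prod_{k=n^\delta_s}^{n-1}(1+y_k)$ and
\[
y_k=a'_x(\nu_k,\xd_{\nu_k})\delta+b'_x(\nu_k,\xd_{\nu_k})\dw_k+c'(\xd_{\nu_k})\db_k .
\]
By (A) and (C) all three coefficients are bounded by $K$. I would then write $\aabs{P_n}^p=\exp\big(p\sum_k\log\abs{1+y_k}\big)$ and use the elementary inequality $\log\abs{1+y}\le y+y^2$, valid for every real $y\ne -1$ (a one–variable calculus check on the two branches $y\gtrless -1$), to obtain the pathwise bound $\aabs{P_n}^p\le\exp\big(p\sum_k y_k+p\sum_k y_k^2\big)$.

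Next I would split the exponent. For the quadratic part, $\sum_k y_k^2\le 3K^2\big(T\delta+\sum_k(\dw_k)^2+\sum_k(\db_k)^2\big)$, while the linear part decomposes as $\sum_k y_k=\sum_k a'_x(\nu_k,\xd_{\nu_k})\delta+\sum_k b'_x(\nu_k,\xd_{\nu_k})\dw_k+\sum_k c'(\xd_{\nu_k})\db_k$, with the drift sum bounded by $KT$. Applying H\"older's inequality, the problem reduces to bounding three exponential moments separately, each uniformly in $\delta$: the \emph{variation} term $\ex{\exp\set{M\sum_k((\dw_k)^2+(\db_k)^2)}}$, the \emph{Wiener} term $\ex{\exp\set{\lambda\sum_k b'_x(\nu_k,\xd_{\nu_k})\dw_k}}$, and the \emph{fractional} term $\ex{\exp\set{\lambda\sum_k c'(\xd_{\nu_k})\db_k}}$.

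The first term is exactly controlled by Lemma~\ref{lem:variation estimate}. For the Wiener term I would exploit the martingale structure: since $\dw_k$ is independent of $\F_{\nu_k}$ by (D) while $b'_x(\nu_k,\xd_{\nu_k})$ is $\F_{\nu_k}$-measurable with $\abs{b'_x}\le K$, conditioning gives $\ex{\exp\set{\lambda b'_x(\nu_k,\xd_{\nu_k})\dw_k}\mid\F_{\nu_k}}\le\exp\set{\lambda^2K^2\delta/2}$, so by the tower property the whole Wiener exponential moment is at most $\exp\set{\lambda^2K^2T/2}$, uniformly in $\delta$.

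The fractional term is the main obstacle, precisely because $\db_k$ is \emph{not} independent of $\F_{\nu_k}$ (long-range dependence of $B^H$), so the conditioning argument fails and the na\"ive bound $\sum_k\abs{c'\db_k}\le K\sum_k\abs{\db_k}$ diverges as $\delta\to0$. Here I would argue pathwise: $\sum_k c'(\xd_{\nu_k})\db_k$ is a Riemann--Stieltjes sum for the forward integral $\int_0^{\nu_n}c'(\xdc{s})\,dB^H_s$, and since $H>1/2$ I can fix $\kappa\in(1/2,H)$ with $2\kappa>1$ and apply Young's inequality, using (C), to bound it by $C\norm{B^H}_\kappa\big(1+\norm{\xd}_\kappa\big)$, where $\norm{\cdot}_\kappa$ denotes the $\kappa$-H\"older seminorm on the grid. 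Exponential integrability of $\norm{B^H}_\kappa$ then follows from Fernique's theorem, and the cross term is treated by Cauchy--Schwarz together with an exponential--quadratic bound $\ex{\exp\set{c\norm{\xd}_\kappa^2}}\le C$ uniform in $\delta$, obtained from a Garsia--Rodemich--Rumsey estimate on the increments of the Euler scheme. Securing this last uniform-in-mesh control of the H\"older regularity of the approximation is the technical heart of the argument; alternatively, the forward--Skorokhod relation \eqref{forw-skor-rel} can be used to extract the same cancellation in the fractional term.
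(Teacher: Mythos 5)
Your reduction is fine as far as it goes: formula \eqref{Ds formula}, the inequality $\log\abs{1+y}\le y+y^2$, the three-way H\"older split, Lemma~\ref{lem:variation estimate} for the quadratic term and the conditional Gaussian computation for the Wiener term all parallel what the paper does. The genuine gap is exactly where you locate the difficulty: the term $\sum_k c'(\xd_{\nu_k})\db_k$, and your proposed treatment of it fails for two reasons. First, the exponent bookkeeping is inconsistent: Young's inequality with a single $\kappa\in(1/2,H)$ forces you to control the grid H\"older seminorm $\norm{\xd}_\kappa$ with $\kappa>1/2$, but the Euler scheme contains the Wiener increments $b(\nu_k,\xd_{\nu_k})\dw_k$; already for $b\equiv 1$ one has $\norm{\xd}_\kappa\ge \max_k\abs{\dw_k}\delta^{-\kappa}\asymp \delta^{1/2-\kappa}\sqrt{2\log N}\to\infty$ in probability as $\delta\to 0$, so the claimed uniform bound $\ex{\exp\set{c\norm{\xd}_\kappa^2}}\le C$ is false, and no Garsia--Rodemich--Rumsey argument can produce it. Second, even after the natural repair --- asymmetric exponents $\kappa'<1/2<\kappa$ with $\kappa+\kappa'>1$, which is possible since $H>1/2$ --- the cross term requires $\ex{\exp\set{\lambda\norm{B^H}_\kappa\norm{\xd}_{\kappa'}}}<\infty$ uniformly in $\delta$. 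But $\norm{\xd}_{\kappa'}$ itself is estimated (circularly, through the fBm integral inside the scheme) by a quantity growing at least linearly in $\norm{B^H}_\kappa$, so you would need exponential moments of a power of $\norm{B^H}_\kappa$ strictly greater than $2$; these are infinite for every positive constant, since already $\abs{B^H_T}T^{-\kappa}\le\norm{B^H}_\kappa$ and a Gaussian variable $Z$ has $\ex{\exp\set{c\abs{Z}^{2+\eps}}}=\infty$. Fernique gives squared-exponential integrability and nothing beyond it, so this obstruction is structural, not technical; the same objection applies to the vague alternative via \eqref{forw-skor-rel}, since exponential moments of Skorokhod integrals are not available either.

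The paper circumvents this by never estimating the fBm sum in absolute value: it \emph{cancels} it. On the event $A=\set{\forall k\ \abs{\dw_k}+\abs{\db_k}\le\gamma}$, the Taylor formula gives $\log\big(c(\xd_{\nu_{k+1}})/c(\xd_{\nu_k})\big)=c'(\xd_{\nu_k})\db_k+\frac{c'(\xd_{\nu_k})}{c(\xd_{\nu_k})}d_k+O(\Delta_k^2)$, so substituting $c'(\xd_{\nu_k})\db_k$ from this identity makes the dangerous sum telescope into $\log\big(c(\xd_{\nu_{n-1}})/c(\xdc{s})\big)$, which is bounded by a constant precisely because hypothesis (C) makes $c$ uniformly positive; what survives are drift, Wiener-martingale and quadratic-variation terms --- the ones your argument already handles. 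The complement of $A$ is then dealt with by crude exponential bounds (growing like $e^{CN^{2-2H}}$) which are beaten by Gaussian tail probabilities of order $e^{-C_\gamma N}$ and $e^{-C_\gamma N^{2H}}$. Note that your proposal never uses the lower bound on $c$ from (C), which is a telltale sign that the key cancellation mechanism is missing.
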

\begin{proof}
It is easy to see from \eqref{Ds formula} that the left-hand side of \eqref{Ds estimate} is finite. Therefore,
it suffices to establish boundedness only for $N$ large enough.

Introduce the following notation:
\begin{gather*}
a_k = a'_x(\nu_k,\xd_{\nu_k}), b_k = b'_x(\nu_k,\xd_{\nu_k}), c_k = c'(\xd_{\nu_k}),\\
\Theta_k = \aabs{1+ a_k \delta + b_k \dw_k + c_k \db_k} , \gamma_k = \abs{\dw_k} + \abs{\db_k},\\
d_k = a(\nu_k,\xd_{\nu_k})\delta+ b(\nu_k,\xd_{\nu_k})\dw_k, \Delta_k = d_k + c(\xd_{\nu_k})\db_k = \xd_{\nu_{k+1},\nu_k}.
\end{gather*}

Fix a small positive constant $\gamma$ (its value will be specified later to satisfy our needs). Put $A = \set{\forall k\ \gamma_k\le \gamma}$.
$$
\ex{\aabs{D_s \xd_{\nu_n}}^{p}} = \ex{\aabs{D_s \xd_{\nu_n}}^{p}\ind{A}} + \ex{\aabs{D_s \xd_{\nu_n}}^{p}\ind{\Omega\setminus A}}.
$$
\emph{Step 1}. First we estimate $\ex{\aabs{D_s \xd_{\nu_n}}^{p}\ind{\Omega\setminus A}}$. Write
\begin{gather*}
\ex{\aabs{D_s \xd_{\nu_n}}^{p}\ind{\Omega\setminus A}}= \sum_{B}
\ex{c(\xdc{s})^p\left(\prod_{k\notin B} \Theta_k\ind{\gamma_k\le \gamma}\prod_{k\in B}\Theta_k\ind{\gamma_k>\gamma} \right)^{p}},
\end{gather*}
where the outer sum is taken over all non-empty $B\subset \set{n_s^\delta,n_s^\delta+1,\dots,n-1}$. Observe that
$$
\aabs{a_k \delta + b_k \dw_k + c_k \db_k}\le K(\delta+\gamma_k),
$$
so this expression does not exceed 1 whenever $\delta<1/(2K)$ and $\gamma_k\le \gamma<1/(2K)$, and we can write
$$
\Theta_k \ind{\gamma_k\le \gamma} \le \exp\set{a_k \delta + b_k \dw_k + c_k \db_k}.
$$
For $\gamma_k>\gamma$ we estimate simply $\Theta_k<\exp\set{K(\delta+\gamma_k)}$,
therefore,
\begin{gather*}
\ex{\aabs{D_s \xd_{\nu_n}}^{p}\ind{\Omega\setminus A}}\\\le C_p\sum_{B}
\ex{\left(\exp\set{\sum_{k\notin B}(a_k \delta + b_k \dw_k + c_k \db_k)} \prod_{k\in B}\Theta_k\ind{\gamma_k>\gamma} \right)^{p}}\\
\le C_p\ex{\exp\set{p\sum_{k\notin B}(K \delta + b_k \dw_k + K\abs{ \db_k})}
\prod_{k\in B}e^{pK(\delta+\gamma_k)}\ind{\gamma_k>\gamma}}\\
\le C_p \Biggl(\ex{\exp\set{3p\sum_{k\notin B} b_k \dw_k }}\ex{\exp\set{3pK\sum_{k=0}^{N-1}\aabs{\db_k} }}\\
\times\ex{\prod_{k\in B}e^{3pK\gamma_k}\ind{\gamma_k>\gamma}}\Biggr)^{1/3}.
\end{gather*}
By the standard properties of the stochastic
integral with respect to $W$,
\begin{equation}\label{Dsestimate-1}
\begin{gathered}
\ex{\exp\set{3p\sum_{k\notin B} b_k \dw_k}} = \ex{\exp\set{\sum_{k\notin B} 9p^2 b_k^2 \delta/2 }}\\
\le \exp\set{5p^2 K^2 N\delta} = \exp\set{5p^2 K^2 T}.
\end{gathered}
\end{equation}
Now estimate, using the H\"older inequality,
\begin{equation}\label{Dsestimate-2}
\begin{gathered}
\ex{\exp\set{3p\sum_{k\notin B} c_k \db_k}}\le \ex{\exp\set{3pK\sum_{k=0}^{N-1}\aabs{\db_k} }}\\
 \le\left(\prod_{k=0}^{N-1} \ex{e^{3pK N \abs{\db_k}}}\right)^{1/N}\le \ex{e^{3pK N \abs{\db_0}}}\\\le Ce^{2p^2 K^2N^2 \delta^{2H}} = C e^{3p^2K^2 T^{2H} N^{2-2H}}.
\end{gathered}
\end{equation}
Further,
\begin{equation*}
\begin{gathered}
\ex{\prod_{k\in B}e^{3pK \gamma_k}\ind{\gamma_k>\gamma}}\le \ex{\prod_{k\in B}e^{3pK\gamma_k^2/\gamma}\ind{\gamma_k>\gamma}}\\
\le \left(\ex{\exp\set{\frac{6pK}\gamma\sum_{k=0}^{N-1} \gamma_k^2}}\ex{\prod_{j\in B}\ind{\gamma_j>\gamma}}\right)^{1/2}
\le C_{p,\gamma} \left(\ex{\prod_{j\in B}\ind{\gamma_j>\gamma}}\right)^{1/2},
\end{gathered}
\end{equation*}
where the last inequality hold for all $N$ large enough thanks to Lemma~\ref{lem:variation estimate}.
To estimate the last expectation, recall that $W$ and $B^H$ are independent and take first the expectation
with respect to $W$:
\begin{equation}
\label{Dsestimate-4}
\begin{gathered}
\ex{\prod_{j\in B}\ind{\gamma_j>\gamma}}\le  \ex{\prod_{j\in B}2\Phi\left(\big(\aabs{\db_j}-\gamma\big)\delta^{-1/2}\right)}\\
\le 2^{n(B)} \left(\prod_{j\in B}\ex{\Phi\left(\big(\aabs{\db_j}-\gamma\big)\delta^{-1/2}\right)^{n(B)}}\right)^{1/n(B)}\\ \le 2^{n(B)} \ex{\Phi\left(\big(\aabs{\db_0}-\gamma\big)\delta^{-1/2}\right)^{n(B)}},
\end{gathered}
\end{equation}
where $n(B)$ is the number of elements of $B$. We split the inner expectation into parts where $\abs{\db_0}\le\gamma/2$
and $\abs{\db_0}> \gamma/2$. For $\abs{\db_0}\le\gamma/2$ it holds $\Phi\left(\big(\aabs{\db_0}-\gamma\big)\delta^{-1/2}\right)
\le \Phi(-\gamma\delta^{-1/2}/2)^{n(B)}$, also we have $P(\abs{\db_0}> \gamma/2) \le 2 \Phi(-\gamma\delta^{-H}/2)$, hence
\begin{gather*}
\ex{\Phi\big((\abs{\db_0}-\gamma)\delta^{-1/2}\big)^{n(B)}}\le \Phi(-\gamma\delta^{-1/2}/2)^{n(B)} + 2 \Phi(-\gamma\delta^{-H}/2)\\
\le e^{-\gamma^2\delta^{-1}n(B)/8 } + e^{-\gamma^2\delta^{-2H}/2} \le  e^{-C_\gamma n(B) N } + e^{-C_\gamma N^{2H}}.
\end{gather*}
Plugging this into \eqref{Dsestimate-4}, we get
\begin{gather*}
\ex{\prod_{j\in B}\ind{\gamma_j>\gamma}}\le  \ex{\prod_{j\in B}2\Phi\big((\abs{\db_j}-\gamma)\delta^{-1/2}\big)}\\
\le e^{C(1-C_\gamma N) n(B) } + e^{C n(B) -C_\gamma N^{2H}}
\end{gather*}
and combining this with \eqref{Dsestimate-1} and \eqref{Dsestimate-2},
we arrive to
\begin{gather*}
\ex{\aabs{D_s \xd_{\nu_n}}^{p}\ind{\Omega\setminus A}}\\\le C_{p,\gamma} e^{C_p N^{2-2H}}\left( \sum_{B}  e^{C(1-C_\gamma N) n(B)} +
 e^{-C_\gamma N^{2H}} \sum_B e^{Cn(B)}\right).
\end{gather*}
For $N$ large enough it holds $C(C_\gamma N -1)\ge C_\gamma N$ (naturally, with different
constants $C_\gamma$ in the left-hand and in the right-hand sides), so  the first sum is bounded from above by
\begin{gather*}
\sum_{B} e^{-C_\gamma N n(B)} = \left(1+e^{-C_\gamma N}\right)^{n-n^\delta_s} - 1  \\
\le \left(1+e^{-C_\gamma N}\right)^{N} - 1 = \exp\set{N\log\left(1+e^{-C_\gamma N}\right)}-1\\
\le \exp\set{C N e^{-C_\gamma N}}-1\le \exp\set{C_\gamma e^{-C_\gamma N}}-1\le C_\gamma e^{-C_\gamma N}.
\end{gather*}
Similarly, the second sum is bounded by
\begin{gather*}
e^{-C_\gamma N^{2H}} \sum_{B} e^{C n(B)} = e^{-C_\gamma N^{2H}}\left(1+e^{C}\right)^{n-n_s^\delta} \le C_\gamma  e^{CN-C_\gamma N^{2H}}.
\end{gather*}
Since $H\in(1/2,1)$, this implies
\begin{gather*}
\ex{\aabs{D_s \xd_{\nu_n}}^{p}\ind{\Omega\setminus A}}\le C_{p,\gamma} e^{C_p N^{2-2H}}\left( e^{-C_\gamma N}  +
e^{CN-C_\gamma N^{2H}} \right)\\
\le C_{p,\gamma} \left(e^{-C_{p,\gamma}N}  + e^{-C_{p,\gamma} N^{2-4H}}\right)
\end{gather*}
for all $N$ large enough. This expression vanishes as $N\to\infty$, hence we get the boundedness of $\ex{\aabs{D_s \xd_{\nu_n}}^{p}\ind{\Omega\setminus A}}$.

\emph{Step 2}. Now we turn to $\ex{\aabs{D_s \xd_{\nu_n}}^{p}\ind{A}}$. If we take $\gamma<K^{-3}/3$ and $\delta<K^{-3}/3$, then
$\aabs{\Delta_k}<2 K^{-2}/3$ on $A$ and $\aabs{c(\xd_{\nu_{k+1}}) - c(\xd_{\nu_k})}< 2K^{-1}/3$. But $c(\xd_{\nu_k})>K^{-1}$,
so $c(\xd_{\nu_{k+1}})/c(\xd_{\nu_k})\in (1/3,5/3)$, which allows us to write by the Taylor formula
\begin{equation}\label{logC}
\begin{gathered}
\log \frac{c(\xd_{\nu_{k+1}})}{ c(\xd_{\nu_k})} = \frac{c'(\xd_{\nu_k})}{c(\xd_{\nu_k})}\Delta_k + R'_k
=  c_k\db_k + \frac{c_k}{c(\xd_{\nu_k})}d_k + R'_k,
\end{gathered}
\end{equation}where $\abs{R'_k}\le C \Delta_k^2$.
Similarly, on $A$
\begin{gather*}
\log \Theta_k = \log \left(1+ a_k\delta + b_k \dw_k + c_k \db_k\right) = a_k\delta + b_k \dw_k + c_k \db_k + R''_k
\end{gather*}
with $\abs{R''_k}\le C\Delta_k^2$. Plugging into this formula the expression for $c_k\db_k$ from \eqref{logC},
we get
\begin{gather*}
\Theta_k = \frac{c(\xd_{\nu_{k+1}})}{c(\xd_{\nu_k})}\exp\set{\alpha_k \delta + \beta_k \dw_k + R_k},
\end{gather*}
where $R_k = R''_k - R'_k$, $\alpha_k = a_k - {c_k a(\xd_{\nu_k})}/{c(\xd_{\nu_k})}$,
$\beta_k = b_k - {c_k b(\xd_{\nu_k})}/{c(\xd_{\nu_k})}$.
Now we can estimate
\begin{gather*}
\ex{\aabs{D_s \xd_{\nu_n}}^{p}\ind{ A}} = \ex{c(\xdc{s})^p\prod_{k=n_s^{\delta}}^{n-1}\Theta_k^p\ind{A}}\\
= \ex{c(\xd_{\nu_{n-1}})^p\exp\set{p\sum_{k=n_s^{\delta}}^{n-1} \left(\alpha_k \delta + \beta_k \dw_k+R_k\right)}}\\
\le C_p \left(\ex{\exp\set{2p\sum_{k=n_s^{\delta}}^{n-1}\beta_k \dw_k}}\ex{\exp\set{2p\sum_{k=0}^{N-1}\abs{R_k}}}\right)^{1/2}\\
\le C_p \left(\ex{\exp\set{2p^2\sum_{k=n_s^{\delta}}^{n-1}\beta^2_k \delta}}\ex{\exp\set{C_p\sum_{k=0}^{N-1}\Delta_k^2}}\right)^{1/2}\\
\le C_p \ex{\exp\set{C_p\sum_{k=0}^{N-1}\gamma_k^2}}\le C_p,
\end{gather*}
where the last holds for all $N$ large enough due to Lemma \ref{lem:variation estimate}. This completes the proof.
\end{proof}

\begin{lemma}
For any $p>0$ one has
\begin{equation}
\label{Xd estimate}
\ex{\aabs{\xd_{t}}^{p}}\le C_p
\end{equation}
for all $t\in[0,T]$.

Moreover,
\begin{equation}\label{Xdcont}
\ex{\abs{\xd_{t} -\xdc{t}}^p}\le C_p\delta^{p/2}
\end{equation}
for any $t\in[0,T]$.
\end{lemma}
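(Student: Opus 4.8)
The plan is to treat the two assertions separately, starting with the continuity estimate \eqref{Xdcont}, which is elementary and, incidentally, needs nothing from \eqref{Xd estimate}. From the definition of the interpolation,
$$
\xd_t-\xdc{t} = a(\td_t,\xdc{t})(t-\td_t)+b(\td_t,\xdc{t})W_{t,\td_t}+c(\xdc{t})B^H_{t,\td_t}.
$$
Since $a$, $b$ and $c$ are bounded by $K$ \emph{uniformly in all their arguments} (Hypotheses (A), (C)), I would bound each coefficient in front of the increments by $K$, so that no knowledge of $\xdc{t}$ is required. Using $t-\td_t\le\delta$ together with the Gaussian moment identities $\ex{\abs{W_{t,\td_t}}^p}\le C_p\delta^{p/2}$ and $\ex{\abs{B^H_{t,\td_t}}^p}\le C_p\delta^{pH}$, and the inequality $\abs{x+y+z}^p\le C_p(\abs{x}^p+\abs{y}^p+\abs{z}^p)$, one gets
$$
\ex{\abs{\xd_t-\xdc{t}}^p}\le C_p\big(\delta^p+\delta^{p/2}+\delta^{pH}\big)\le C_p\delta^{p/2},
$$
the last step using $H>1/2$ and $\delta\le1$.

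For \eqref{Xd estimate} I would use the integral form \eqref{euler} and write $\xd_u=X_0+I_1+I_2+I_3$, splitting into the drift, Wiener and fBm integrals; by Lyapunov's inequality it suffices to treat $p\ge2$. The drift term is bounded deterministically, $\abs{I_1}\le KT$, because $\abs{a}\le K$. For $I_2=\int_0^u b(\td_s,\xd_{\td_s})dW_s$, the bound $\abs{b}\le K$ and the Burkholder--Davis--Gundy inequality give $\ex{\sup_u\abs{I_2}^p}\le C_p\ex{(\int_0^T b^2\,ds)^{p/2}}\le C_p(K^2T)^{p/2}$. Thus both are bounded with constants independent of $\delta$, and the whole difficulty is concentrated in $I_3=\int_0^u c(\xd_{\td_s})dB^H_s$.

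The hardest step, where the fractional calculus is essential, is the $L^p$ control of $I_3$. Here I would apply the forward-to-Skorokhod relation \eqref{forw-skor-rel} to $v_s=c(\xd_{\td_s})\ind{[0,u]}(s)$, which lies in the required domain because $D_r v_s=c'(\xd_{\td_s})D_r\xd_{\td_s}\ind{[0,u]}(s)$ has all moments bounded by \eqref{Ds estimate} and $\iint_{[0,T]^2}\psi<\infty$. This expresses $I_3$ as a trace term $\iint D_r v_s\,\psi(s,r)\,dr\,ds$ plus the Skorokhod integral $\delta(v)$. For the trace term I would factor out $\abs{c'}\le K$, apply Jensen's inequality against the finite measure $\psi(s,r)\,dr\,ds$ (of mass $T^{2H}$) and then \eqref{Ds estimate}, obtaining a $\delta$-independent bound. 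For $\delta(v)$ I would invoke the $L^p$ form of the divergence estimate (Meyer's inequality), $\ex{\abs{\delta(v)}^p}\le C_p(\ex{\norm{v}_H^p}+\ex{\norm{Dv}_{H\otimes H}^p})$, available in \cite{biaginietal,decre98}: here $\norm{v}_H^2\le K^2T^{2H}$ deterministically since $\abs{c}\le K$, while $\ex{\norm{Dv}_{H\otimes H}^p}$ is bounded exactly as the trace term via $\abs{c'}\le K$, \eqref{Ds estimate}, Cauchy--Schwarz and Jensen against $\psi\otimes\psi$. Combining the three pieces gives $\ex{\abs{\xd_u}^p}\le C_p$ for every $u$; alternatively one may first establish this at the grid points $\nu_n$, where $I_3(\nu_n)=\sum_{k<n}c(\xd_{\nu_k})\db_k$, and then pass to arbitrary $t$ through the already-proved estimate \eqref{Xdcont}.

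The main obstacle I anticipate is precisely the passage beyond $p=2$ for the Skorokhod integral: the excerpt records only the $L^2$ bound \eqref{skorintestim}, so one must either cite the standard $L^p$ continuity of $\delta$ or reprove it. A purely pathwise bound on $I_3$ (say summation by parts against $\sup_t\abs{B^H_t}$) is tempting but fails, since discarding the cancellations in $\sum_k c(\xd_{\nu_k})\db_k$ produces spurious factors of order $N^{1-H}$; the Skorokhod decomposition is what restores the correct $O(1)$ behaviour.
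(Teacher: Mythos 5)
Your proof is correct, but it takes a genuinely different route from the paper's. The paper proves \eqref{Xd estimate} first at the grid points, by induction on $m$ for $p=2m$: it expands $\ex{\big(\xd_{\nu_{n+1}}\big)^{2m}}$, treats the mixed terms containing $\db_n$ by the duality relation \eqref{iBp} combined with the derivative bounds \eqref{Ds estimate} (obtaining, e.g., $\ex{\big(\xd_{\nu_n}\big)^{2m-1}c(\xd_{\nu_n})\db_n}\le C\big(1+\ex{\big(\xd_{\nu_n}\big)^{2m}}\big)\delta$), and closes the recursion with the discrete Gronwall Lemma~\ref{discgronwall}; the continuity estimate \eqref{Xdcont} is then proved exactly as you do, and is used to pass from grid points to all $t\in[0,T]$. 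You instead work directly with the integral form \eqref{euler}: the drift is bounded deterministically, the Wiener integral by Burkholder--Davis--Gundy, and the fBm integral is converted through \eqref{forw-skor-rel} into a trace term plus a Skorokhod integral, the latter controlled by Meyer's $L^p$ inequality for the divergence. Both arguments lean on \eqref{Ds estimate} in an essential way, and your ordering (proving \eqref{Xdcont} first and noting it needs nothing from \eqref{Xd estimate}) is a valid rearrangement of the paper's logic. The trade-off is clear: your route is shorter and avoids the induction/Gronwall machinery, but it imports the $L^p$ continuity of the divergence operator, which the paper never states --- it records only the $L^2$ bound \eqref{skorintestim}, and its induction scheme is designed precisely so that the $L^2$ duality \eqref{iBp} suffices, keeping the proof self-contained. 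You correctly flag this dependency; since Meyer's inequality is standard, citing it is legitimate rather than a gap, though one should add (as the paper itself leaves implicit) that all Malliavin-calculus estimates here are applied with respect to $B^H$ conditionally on $W$, using the independence Hypothesis~(D), because your integrand $c(\xd_{\td_s})$ is a functional of both driving processes.
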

\begin{proof}
It is enough to prove this for $p=2m$, $m\in\mathbb N$.
We first prove \eqref{Xd estimate} for $t=\nu_n$, using an induction by $m$.

Start with $m=1$.

Denote $a_n = a(\nu_n,\xd_{\nu_n}), b_n =b(\nu_n,\xd_{\nu_n}), c_n = c(\xd_{\nu_n})$ and
write for $\delta\in(0,1/2)$ by Jensen's inequality
\begin{gather*}
\ex{\big(\xd_{\nu_{n+1}}\big)^{2}}\le \ex{\big(\xd_{\nu_{n}} + b_n\dw_n+ c_n\db_n\big)^{2}}(1-\delta)^{-1} +
2\ex{\big(a_n\delta\big)^{2}}\delta^{-1}\\
\le \Big(\ex{\big(\xd_{\nu_{n}}\big)^2} + \ex{(b_n\dw_n)^2}+ \ex{(c_n\db_n)^{2}}\\
+ 2\ex{\xd_{\nu_{n}}b_n\dw_n}+2\ex{b_nc_n\,\dw_n\, \db_n}+2\ex{\xd_{\nu_{n}}\db_n} \Big)e^{2 \delta} + C \delta\\
\le \Big(\ex{\big(\xd_{\nu_{n}}\big)^2} + C\delta + C\delta^{2H}+2\ex{\xd_{\nu_{n}}\db_n} \Big)e^{2 \delta} + C \delta\\
\le \ex{\big(\xd_{\nu_{n}}\big)^2}e^{2\delta} + \ex{\xd_{\nu_{n}}\db_n} e^{2 \delta} + C\delta.
\end{gather*}
By \eqref{iBp} and \eqref{Ds estimate}, we can write
\begin{gather*}
\ex{\xd_{\nu_{n}}\db_n} = \alpha_H\int_0^{\nu_n} \int_{\nu_n}^{\nu_{n+1}} \ex{D_s\xd_{\nu_n}}(t-s)^{2H-2} dt\, ds \\
\le C  \int_{\nu_n}^{\nu_{n+1}} (t-\nu_n)^{2H-1} dt\le C\delta.
\end{gather*}
Then by Lemma~\ref{discgronwall}
$$
\ex{\big(\xd_{\nu_{n}}\big)^{2}}\le X_0^2 e^{C\delta n} \le Ce^{C\delta N} \le C,
$$
as required.

Now let $m\ge 2$ and for $l\le m$
\begin{equation*}
\ex{\big(\xd_{t}\big)^{2l}}\le C_{2l}.
\end{equation*}
In the further estimates constants may depend on $m$, but not on $n$.

Observe that by the Jensen inequality for $\delta<1$
$$
(a + b)^{2m} \le (1-\delta)^{1-2m} a^{2m} + \delta^{1-2m} b^{2m},
$$
whence
\begin{equation}
\label{jensen}
(a + b)^{2m} \le  a^{2m} (1+C_m\delta) + C_mb^{2m}\delta^{1-2m},
\end{equation}
therefore
\begin{gather*}
\ex{\big(\xd_{\nu_{n+1}}\big)^{2m}}\le \ex{\big(\xd_{\nu_{n}} + b_n\dw_n+ c_n\db_n\big)^{2m}}(1+C\delta) \\{}+
\ex{\big(a_n\delta\big)^{2m}}\delta^{1-2m}
\le \ex{\big(\xd_{\nu_{n}} + b_n\dw_n+ c_n\db_n\big)^{2m}}(1+C\delta) + C\delta.
\end{gather*}
Expand the power in the first term and consider a generic term of this expansion (without a coefficient):
\begin{gather*}
\ex{\big(\xd_{\nu_n}\big)^{2m-i-k}(b_n\dw_n)^i (c_n \db_n)^k}\\ = \ex{\big(\xd_{\nu_n}\big)^{2m-i-k}b_n^ic_n^k \ex{(\dw_n)^i ( \db_n)^k\mid \F_{\nu_n}}}\\
= \ex{\big(\xd_{\nu_n}\big)^{2m-i-k}b_n^ic_n^k \ex{(\dw_n)^i} \ex{( \db_n)^k\mid \F_{\nu_n}}}\\ = \ex{\big(\xd_{\nu_n}\big)^{2m-i-k}b_n^ic_n^k ( \db_n)^k} \frac{i!}{2^{i/2} (i/2)!}\delta^{i/2} \ind{i\ \text{even}}.
\end{gather*}
Thus, we can write
\begin{gather*}
\ex{\big(\xd_{\nu_{n}} + b_n\dw_n+ c_n\db_n\big)^{2m}}\\ = \sum_{k=0}^{2m} \sum_{j=0}^m {2m \choose k,2j,2m-k-2j} \ex{\big(\xd_{\nu_n}\big)^{2m-2j-k}b_n^{2j} c_n^k ( \db_n)^k} \frac{(2j)!}{2^{j} j!}\delta^{j},
\end{gather*}
where $${a+b+c \choose a,b,c} = \frac{(a+b+c)!}{a!b!c!}
$$
is a trinomial coefficient.

For $k=0$, $j\ge 1$, the terms of this sum are bounded by $C\delta$ by the induction hypothesis and boundedness of $b_n,c_n$.

Further, for $k\ge 2$
\begin{gather*}
\ex{\big(\xd_{\nu_n}\big)^{2m-2j-k}b_n^{2j} c_n^k (\db_n)^k}\le C \ex{\abs{\xd_{\nu_n}}^{2m-2j-k} \abs{\db_n}^k}\\
\le C\ex{\big(\xd_{\nu_n}\big)^{2m}}^{1/\lambda}\ex{\abs{\db_n}^{k\eta}}^{1/\eta}\\\le C\Big(1+\ex{\big(\xd_{\nu_n}\big)^{2m}}\Big)\delta^{2H}\le C\Big(1+\ex{\big(\xd_{\nu_n}\big)^{2m}}\Big)\delta,
\end{gather*}
where $\lambda = 2m/(2m-2j-k)$, $\eta = \lambda/(\lambda-1)$; here we have used an obvious estimate
\begin{equation}
\label{obvineq}
\Big(\ex{\big(\xd_{\nu_n}\big)^{2m}}\Big)^{1/\lambda}\le 1+\ex{\big(\xd_{\nu_n}\big)}^{2m}.
\end{equation}
Now estimate the term with $k=1$, $j=0$, using formula \eqref{iBp}:
\begin{gather*}
\ex{\big(\xd_{\nu_n}\big)^{2m-1} c_n \db_n} = \int_0^{\nu_n} \int_{\nu_n}^{\nu_{n+1}}
\ex{D_s\left(\big(\xd_{\nu_n}\big)^{2m-1}c(\xd_{\nu_n})\right)}\psi(t,s) dt\, ds \\
= \int_0^{\nu_n} \int_{\nu_n}^{\nu_{n+1}}
\ex{\Big((2m-1)\big(\xd_{\nu_n}\big)^{2m-2}c(\xd_{\nu_n}) + \big(\xd_{\nu_n}\big)^{2m-1}c'(\xd_{\nu_n})\Big)D_s \xd_{\nu_n}}\\
\times H(2H-1)(t-s)^{2H-2} dt\, ds\\
\le C\Big(1+\ex{\big(\xd_{\nu_n}\big)^{2m}}\Big)\int_{\nu_n}^{\nu_{n+1}} (t-\nu_n)^{2H-1} dt\le  C\Big(1+\ex{\big(\xd_{\nu_n}\big)^{2m}}\Big)\delta.
\end{gather*}
Here, as above we have used the H\"older inequality, inequality \eqref{obvineq} and boundedness of moments of the stochastic derivative.
The terms with $k=1$, $j\ge 1$ are estimated similarly.

Collecting the estimates, we get
$$
\ex{\big(\xd_{\nu_{n+1}}\big)^{2m}}\le \ex{\big(\xd_{\nu_n}\big)^{2m}}(1+C\delta) + C\delta,
$$
so by Lemma~\ref{discgronwall} we get the desired boundedness.

Now write for $s\in[\nu_n,\nu_{n+1})$
\begin{gather*}
\ex{\abs{\xd_{s} -\xd_{\nu_n}}^p}\le C_p \Big(\ex{\abs{a_n(s-\nu_n)}^p} + \ex{\abs{b_n W_{s,\nu_n})}^p} + \ex{\abs{c_n B^H_{s,\nu_n}}^p}\Big)\\
\le C_p \big((s-\nu_n)^p + (s-\nu_n)^{p/2} + (s-\nu_n)^{pH}\big) \le C_p (s-\nu_n)^{p/2},
\end{gather*}
which gives \eqref{Xdcont} and together with \eqref{Xd estimate} for $t=\nu_n$ implies \eqref{Xd estimate} for all $t\in[0,T]$.
\end{proof}

\section{Rate of convergence}

Now we are ready to prove the main result about the mean-square rate of convergence of Euler approximations.
\begin{theorem}\label{thm-main}
Euler approximations \eqref{euler} for the solution of equation \eqref{main} satisfy
$$
\ex{\big(X_t-\xd_t\big)^2}\le C(\delta+\delta^{4H-2}).
$$
\end{theorem}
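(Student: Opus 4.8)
The plan is to set $\phi(t)=\ex{\big(X_t-\xd_t\big)^2}$, to derive an integral inequality of the form $\phi(t)\le C\int_0^t\phi(s)\,ds+C(\delta+\delta^{4H-2})$, and to conclude by Gronwall's inequality. Subtracting \eqref{euler} from the integral form \eqref{main} and using $(u_1+u_2+u_3)^2\le 3(u_1^2+u_2^2+u_3^2)$, I would write
\begin{gather*}
X_t-\xd_t=\int_0^t\big(a(s,X_s)-a(\td_s,\xd_{\td_s})\big)\,ds+\int_0^t\big(b(s,X_s)-b(\td_s,\xd_{\td_s})\big)\,dW_s\\
{}+\int_0^t\big(c(X_s)-c(\xd_{\td_s})\big)\,dB_s^H
\end{gather*}
and estimate the three mean-square terms separately.

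For the drift and Wiener terms I would split each coefficient increment into three pieces by inserting intermediate points: $a(s,X_s)-a(s,\xd_s)$, bounded in space through $a'_x$ by Hypothesis (A) and yielding $C\abs{X_s-\xd_s}$; $a(s,\xd_s)-a(\td_s,\xd_s)$, bounded in time by Hypothesis (B) and yielding $C\abs{s-\td_s}^{2H-1}\le C\delta^{2H-1}$; and $a(\td_s,\xd_s)-a(\td_s,\xd_{\td_s})$, again bounded by $C\abs{\xd_s-\xd_{\td_s}}$. The first piece feeds $C\int_0^t\phi(s)\,ds$ into the inequality, the time piece squares to $\delta^{4H-2}$, and the interpolation piece is controlled by \eqref{Xdcont} and contributes $C\delta$. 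After Cauchy--Schwarz for the Lebesgue integral and the It\^o isometry for the stochastic one, these two terms together are bounded by $C\int_0^t\phi(s)\,ds+C(\delta+\delta^{4H-2})$.

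The fBm term is the crux. With $g_s=c(X_s)-c(\xd_{\td_s})$ I would pass from the forward to the Skorokhod integral by \eqref{forw-skor-rel},
$$
\int_0^t g_s\,dB_s^H=\int_0^t g_s\,\delta B_s^H+\iint_{[0,t]^2}D_r g_s\,\psi(s,r)\,dr\,ds.
$$
The divergence part is bounded via \eqref{skorintestim} by $\norm{g}^2_{H;1,2}$; its first summand $\ex{\norm{g}_H^2}$ reduces, after the same splitting of $g_s$, the bound $\sqrt{ab}\le(a+b)/2$, and the integrability of $\psi$ in one variable, to $C\int_0^t\phi(s)\,ds+C\delta$. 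Both the second summand of $\norm{g}^2_{H;1,2}$ and the trace term lead to integrals against $\psi$ of $D_r g_s=c'(X_s)D_r X_s-c'(\xd_{\td_s})D_r\xd_{\td_s}$, which I would analyze using the explicit product \eqref{Ds formula} for $D_r\xd_{\td_s}$, the analogous variational representation of $D_r X_s$, and the uniform moment bound \eqref{Ds estimate}.

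The control of $D_r g_s$ is the step I expect to be the main obstacle. Bounded moments alone only give $D_r g_s=O(1)$, which is far too crude, since integrating it against $\psi\otimes\psi$ would leave an $O(1)$ contribution; one must instead show that $D_r g_s$ is of the order of the error itself. Writing $D_r g_s=c'(X_s)\big(D_r X_s-D_r\xd_{\td_s}\big)+\big(c'(X_s)-c'(\xd_{\td_s})\big)D_r\xd_{\td_s}$, the second summand is $O(\abs{X_s-\xd_{\td_s}})$ and hence of the right order, but the first forces a comparison of the Malliavin derivatives of the exact and approximate solutions. This difference $D_r X_s-D_r\xd_{\td_s}$ mixes the solution error $X_s-\xd_s$, the discretization gap $s-\td_s$, and the discrepancy between the variational equation and the discrete product of \eqref{Ds formula}; it is in effect a second, coupled convergence problem at the level of derivative processes, and it is here that the second-derivative smoothness of $c$ from Hypothesis (C) is essential. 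Once $\ex{(D_r g_s)^2}$ is shown to be of order $\phi(s)+\delta+\delta^{4H-2}$, the double integrals against $\psi\otimes\psi$ collapse, by the one-variable integrability of $\psi$, to $C\int_0^t\phi(s)\,ds+C(\delta+\delta^{4H-2})$, and Gronwall's inequality completes the proof.
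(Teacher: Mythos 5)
Your reduction of the drift and Wiener terms is fine, but the proof does not go through at its declared crux: you never establish the bound $\ex{(D_r g_s)^2}\le C\big(\phi(s)+\delta+\delta^{4H-2}\big)$, you only observe that it would suffice. Obtaining it requires controlling $D_r X_s - D_r\xd_{\td_s}$, i.e.\ running a second Gronwall argument for the error of the derivative processes, and this coupled problem is genuinely hard: the variational equation for $D_rX_s$ again contains a forward fBm integral, whose mean-square analysis by your own method would call for second-order Malliavin derivatives; closing the loop needs Lipschitz continuity of $a'_x$ and $b'_x$ in $x$, i.e.\ second derivatives of $a$ and $b$ that Hypothesis (A) does not provide; and your route presupposes Malliavin differentiability and uniform moment bounds for the \emph{true} solution $X$ (an analogue of \eqref{Ds estimate} for $D_rX_s$), which are nowhere available --- the lemmas yielding \eqref{Ds formula} and \eqref{Ds estimate} concern only the discrete scheme. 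So what you call ``the main obstacle'' is the whole difficulty, and it is left unresolved.

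The paper circumvents precisely this obstacle, and this is where the uniform positivity of $c$ in Hypothesis (C) enters: one sets $\psi(x)=\int_0^x c(z)^{-1}dz$ (a Lamperti-type transform) and applies the It\^o formula, so that both $\psi(X_t)$ and $\psi(\xd_t)$ satisfy equations with \emph{additive} noise $B^H_t$. In the difference $\psi(X_t)-\psi(\xd_t)$ the fBm integrals cancel identically, and the fBm survives only inside an error term $G_t^\delta$ whose integrand $c^\delta_s = c^{-1}(\xd_s)\big(c(\xd_s)-c(\xdc{s})\big)$ involves the approximation alone. Its Malliavin derivative is then computed from the explicit product formula \eqref{Ds formula} and bounded via \eqref{Ds estimate}, and it is small of order $\delta^{1/2}$ because it carries the factor $c(\xd_s)-c(\xdc{s})$ and the increment $D_u\xd_{s,\td_s}$ --- not because it is comparable to the error $X_s-\xd_s$. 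After showing $\ex{(G^\delta_t)^2}\le C(\delta+\delta^{4H-2})$, the Gronwall argument for $\ex{\big(\psi(X_t)-\psi(\xd_t)\big)^2}$ together with $K^{-1}\abs{x-y}\le\abs{\psi(x)-\psi(y)}$ gives the theorem. To salvage a direct, untransformed proof along your lines you would have to strengthen the hypotheses on $a,b$ and develop the Malliavin calculus for $X$ itself --- a substantially harder route than the one the paper takes.
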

\begin{proof}
Define $\psi(x) = \int_0^x c(z)^{-1} dz$. It is clear that
$$
K^{-1}\abs{x-y}\le \abs{\psi(x)-\psi(y)}\le K\abs{x-y}.
$$

Write by the It\^o formula
$$
\psi(X_t) = \psi(X_0) + \int_0^t \Big(\alpha(s,X_s)ds + \beta(s,X_s) dW_s \Big) + B_t^H,
$$
where
$$
\alpha(s,x) = \frac{a(s,x)}{c(x)} - \frac{b(s,x)^2 c'(x)}{2 c(x)^2}, \quad \beta(s,x) = \frac{b(s,x)}{c(x)}.
$$
Similarly,
$$
\psi(\xd_t) = \psi(\xd_0) + \int_0^t \Big(\alpha(s,\xd_s)ds + \beta(s,\xd_s) dW_s \Big) + B_t^H - G_t^\delta,
$$
where
\begin{gather*}
G_t^\delta = \int_0^t \bigg[c^{-1}(\xd_s)\Big(\big(a(s,\xd_s)-a(\td_s,\xdc{s})\big)ds + \big(b(s,\xd_s)-b(\td_s,\xdc{s})\big)dW_s\Big)\\
+ c^{-1}(\xd_s)\big(c(\xd_s)-c(\xdc{s})\big)dB_s^H + \frac{c'(\xd_s)}{2c^2(\xd_s)}\big(b(s,\xd_s)-b(\td_s,\xdc{s})\big)^2ds\bigg]\\
=: \int_0^t \big(a^\delta_s\, ds + b^\delta_s\, dW_s + c^\delta_s dB_s^H + d_s^\delta\,ds\big).
\end{gather*}

Estimate
\begin{gather*}
\ex{\left(\int_0^t a^\delta_s\, ds\right)^2}\le C\int_0^t \ex{(a^\delta_s)^2} ds \\
\le C\int_0^t \ex{c(\xd_s)^{-2}\big(a(s,\xdc{s})-a(\td_s,\xdc{s})\big)^2 + \big(a(s,\xd_s)-a(s,\xdc{s})\big)^2}ds\\
\le C\bigg(\delta^{2H-1} + \int_0^t \ex{\big(\xd_{s,\td_s}\big)^2}ds\bigg) \le C\delta^{2H-1}.
\end{gather*}
Similarly,
\begin{gather*}
\ex{\left(\int_0^t d^\delta_s\, ds\right)^2}\le C\int_0^t \ex{(d^\delta_s)^2} ds
\le C\delta^{2H-1}.
\end{gather*}
and using the It\^o isometry,
\begin{gather*}
\ex{\left(\int_0^t b^\delta_s\, dW_s\right)^2} = \int_0^t \ex{(b^\delta_s)^2} ds \le C\delta^{2H-1}.
\end{gather*}

Further, by \eqref{forw-skor-rel}
\begin{gather*}
\int_0^t c^\delta_s\, dB^H_s = \int_0^t c^\delta_s\, \delta B^H_s + \iint_{[0,t]^2} D_s c^\delta_u \psi(s,u) ds\,du=:I'(\delta,t)+I''(\delta,t).
\end{gather*}
By the chain rule for the stochastic derivative,
\begin{gather*}
D_u c^\delta_s = D_u\Big(c^{-1}(\xd_s)\big(c(\xd_s)-c(\xdc{s})\big)\Big)  \\
=\frac{c'(\xd_s)}{c(\xd_s)^2}\big(c(\xd_s)-c(\xdc{s})\big)D_u\xd_s +
 c^{-1}(\xd_s)\big( c'(\xd_s)D_u\xd_s- c'(\xdc{s})D_u\xdc{s}\big)\\
= \frac{c'(\xd_s)}{c(\xd_s)^2}\big(c(\xd_s)-c(\xdc{s})\big)D_u\xd_s
+ c^{-1}(\xd_s)\big(c'(\xd_s)-c'(\xdc{s})\big)D_u\xd_s\\
+ c^{-1}(\xd_s)c'(\xdc{s})D_u\xd_{s,\td_{s}}\big) =: D_1(u,s)+D_2(u,s) +D_3(u,s).
\end{gather*}
Now
$$
\ex{D_1(u,s)^2}\le C\ex{\big(c(\xd_s)-c(\xdc{s})\big)^2 (D_u\xd_s)^2}\le C\ex{\big(\xd_s-\xdc{s}\big)^2}\le C\delta.
$$
Similarly,
$$
\ex{D_2(u,s)^2}\le  C\delta.
$$
Further, for $u\le \td_{s}$
\begin{multline*}
\xd_{s,\td_{s}} =D_u \xdc{s}\Big( a'_x(\td_s,\xdc{s})(s-\td_{s}) + b'_x(\td_{s},\xdc{s})W_{s,\td_{s}} +
c'(\xdc{s})B^H_{s,\td_{s}}\Big)
\end{multline*}
and
\begin{gather*}
\ex{D_3(u,s)^2}\le \ex{(D_u \xdc{s})^4}^{1/2}\\
\times\ex{\Big( a'_x(\td_s,\xdc{s})(s-\td_{s}) + b'_x(\td_{s},\xdc{s})W_{s,\td_{s}} + c'(\xdc{s})B^H_{s,\td_{s}}\Big)^4}^{1/2}
\le C\delta;
\end{gather*}
for $u\in[\td_s,s)$
$$
D_3(u,s) = c(\xdc{s})
$$
and $D_3(u,s) = 0$  for $u>s$. Thus
\begin{gather*}
\ex{I''(\delta,t)^2}\le C\iint_{[0,t]^2}\mathsf E\Big[D_1(u,s)^2+D_2(u,s)^2\\
{}+D_3(u,s)^2\ind{[0,\td_s]}(u)\Big]\psi(s,u)du\,ds
+ C\ex{\left(\int_0^t \int_{\td_{s}}^s \abs{c(\xdc{s})}\psi(s,u) du\,ds\right)^2} \\
\le C\left(\delta+ \left(\int_0^t \aabs{s-\td_s}^{2H-1} ds\right)^2 \right)\le C(\delta+\delta^{4H-2}).
\end{gather*}

By \eqref{skorintestim}
\begin{gather*}
\ex{I'(\delta,t)^2} \le  \int_{[0,t]^2} \ex{c^\delta_s\, c^\delta_u} \psi(s,u) ds\, du \\
+ \int_{[0,t]^4} \ex{D_u c^\delta_v\, D_s c^\delta_z} \psi(s,u) du\,dv\,ds\,dz.
\end{gather*}
The first term is estimated by $C\delta$ using that
$$
\ex{c^\delta_s\, c^\delta_u}\le \ex{(c_u^\delta)^2}+\ex{(c_s^\delta)^2}\le C\ex{\big(\xd_{s,\td_{s}}\big)^2}+\ex{\big(\xd_{u,\td_{u}}\big)^2}\le C\delta.
$$
In the second, we write $D_u c_v^\delta = D_1(u,v) + D_2(u,v) + D_3(u,v)$ and similarly for $D_s c_z^\delta$. For $D_1,D_2$ we use the Cauchy inequality and the above estimates to get a bound of $C\delta$. This also works for $D_3$ when $u\notin [\td_v,v)$ and $s\notin[\td_z,z)$.
Two remaining terms for $D_3$ are similar, take e.g.
\begin{gather*}
\int_0^t\int_0^t\int_0^t\int_{\td_v}^v  \ex{D_u c^\delta_v D_s c^\delta_z} \abs{s-u}^{2H-2}\abs{z-v}^{2H-2}du\,dv\,ds\,dz\\
\le C\delta^{2H-1}\int_0^t\int_0^t\int_0^t  \ex{ \aabs{D_s c^\delta_z}} \abs{z-v}^{2H-2} dv\,ds\,dz.
\end{gather*}
Again, if $s\notin[\td_z,z)$, the integral can be estimated by $C\delta^{1/2}$; for $s\in[\td_z,z)$ we get $\delta^{2H-1}$.
Ultimately,
$$
\ex{I'(\delta,t)^2}\le C(\delta+\delta^{4H-2})
$$
and adding this to the previous estimates, we get
$$
\ex{(G_t^\delta)^2}\le C(\delta+\delta^{4H-2}).
$$

So we can write
\begin{gather*}
\ex{\big(\psi(X_t)-\psi(\xd_t)\big)^2}\\\le C\left(\int_0^t\ex{\big(\al(s,X_s)-\al(s,\xd_s)\big)^2+\big(\beta(s,X_s)-\beta(s,\xd_s)\big)^2}ds
+ \ex{(G_t^\delta)^2}\right)\\
\le C\int_0^t \ex{\big(X_s-\xd_s\big)^2}ds + C(\delta+\delta^{4H-2})\\
\le C\int_0^t \ex{\big(\psi(X_s)-\psi(\xd_s)\big)^2}ds + C(\delta+\delta^{4H-2}).
\end{gather*}
By the Gronwall lemma,
$$
\ex{\big(\psi(X_t)-\psi(\xd_t)\big)^2}\le C(\delta+\delta^{4H-2}),
$$
hence
$$
\ex{\big(X_t-\xd_t\big)^2}\le C(\delta+\delta^{4H-2}),
$$
as required.
\end{proof}
\begin{remark}
The obtained estimate can also be written as $\big(\ex{\big(X_t-\xd_t\big)^2}\big)^{1/2}\le C (\delta^{1/2}\vee \delta^{2H-1})$, so the
mean-square rate of convergence for the mixed equation is the worst of the two rates for ``pure'' stochastic differential equation with Brownian motion, $C\delta^{1/2}$, and with fractional Brownian motion, $C\delta^{2H-1}$. As long as these estimates for pure equations are sharp (see \cite{k-p,nourdin}), we
get that in our case the estimate is sharp as well.

An interesting observation is that the value of the Hurst index where the rate of convergence changes is $H=3/4$. From \cite{cheredito}
it is known that the measure induced by the mixture of Brownian motion and independent fractional Brownian motion
is equivalent to the Wiener measure iff $H>3/4$. So in this case it is perhaps natural to expect that the rate of convergence
of Euler approximations is the same as for Brownian motion, and this is exactly what we see here.
\end{remark}

\providecommand{\bysame}{\leavevmode\hbox to3em{\hrulefill}\thinspace}
\providecommand{\MR}{\relax\ifhmode\unskip\space\fi MR }
\providecommand{\MRhref}[2]{%
  \href{http://www.ams.org/mathscinet-getitem?mr=#1}{#2}
}
\providecommand{\href}[2]{#2}

\end{document}